\begin{document}

\title{Convergence Rate of the Causal Jacobi Derivative Estimator}
\titlerunning{Convergence Rate of the Causal Jacobi Derivative Estimator}

\author{Da-yan Liu\inst{1,}\inst{2}  \and Olivier Gibaru\inst{1,}\inst{3} \and Wilfrid Perruquetti\inst{1,}\inst{4}}
\authorrunning{D.Y. Liu, O. Gibaru, and W.~Perruquetti}

\institute{INRIA Lille-Nord Europe, Équipe Projet Non-A,  Parc
Scientifique de la Haute Borne 40, avenue Halley B{\^{a}}t. A, Park
Plaza, 59650 Villeneuve d'Ascq, France \and Université  de Lille 1,
Laboratoire de Paul Painlevé,   59650, Villeneuve d'Ascq, France
\\ \email{dayan.liu@inria.fr} \and  Arts et Metiers ParisTech centre de
Lille, Laboratory of Applied Mathematics and Metrology (L2MA),  8 Boulevard Louis XIV, 59046 Lille Cedex, France
\\ \email{olivier.gibaru@ensam.eu} \and \'{E}cole Centrale de Lille, Laboratoire de LAGIS,
 BP 48, Cit\'e Scientifique, 59650
Villeneuve d'Ascq, France\\ \email{wilfrid.perruquetti@inria.fr}}

\maketitle

\begin{abstract}
Numerical causal derivative estimators from noisy data are essential
for real time applications especially for control applications or
fluid simulation so as to address the new paradigms in solid
modeling and video compression. By using an analytical point of view
due to Lanczos \cite{C. Lanczos} to this causal case, we revisit
$n^{th}$\ order derivative estimators originally introduced within
an algebraic framework by Mboup, Fliess and Join in \cite{num,num0}.
Thanks to a given noise level $\delta$ and a well-suitable
integration length window, we show that the derivative estimator
error can be $\mathcal{O}(\delta ^{\frac{q+1}{n+1+q}})$ where $q$\
is the order of truncation of the Jacobi polynomial series expansion
used. This so obtained bound helps us to choose the values of our
parameter estimators. We show the efficiency of our method on some
examples. \keywords{Numerical differentiation, Ill-posed problems,
Jacobi orthogonal series}
\end{abstract}


\section{Introduction}

There exists a large class of numerical derivative estimators which
were introduced according to different scopes
(\cite{R19,R29,{J. Cheng},{Z. Wang2},{A.G. Ramm},{D.A. Murio2},{G. Nakamura}}).
When the initial discrete
data are corrupted by a noise, numerical differentiation becomes an
ill-posed problem.\ By using an algebraic method inspired by
\cite{mexico,mboup,Med08}, Mboup, Fliess and Join introduced in
\cite{num,num0} real-time numerical differentiation by integration
estimators that provide an effective response to this problem.\
Concerning the robustness of this method, \cite{ans,shannon} give
more  theoretical foundations. These estimators extend those
introduced by \cite{C. Lanczos,S.K. Rangarajana,Z. Wang} in the
sense that they use Jacobi polynomials. In \cite{num}, the authors
show that the mismodelling due to the truncation of the Jacobi
expansion can be improved by allowing a small time-delay in the
derivative estimation. This time-delay is obtained as the product of
the length of the integration window by the smallest root of the
first Jacobi polynomial in the remainder of series expansion.

In \cite{Num algo}, we extend to the real domain the parameter values of these
Jacobi estimators.\ This allows us to decrease the value of this smallest root
and consequently the time-delay estimation. In \cite{Jcam}, we study for
center derivative Jacobi estimators the convergence rate of these estimators.

Thanks to these results, we propose in this article to tackle the
causal convergence rate case. We give an optimal convergence rate of
these estimators depending on the derivative order, the noise level
of the data and the truncation order. Moreover, we show that the
estimators for the $n^{th}$ order derivative of a smooth function
can be obtained by taking $n$ derivations to the zero-order
estimator of the function. Hence, we can give a simple expression
for these estimators, which is much easier to calculate than the one
given in \cite{Jcam}.

This paper is organized as follows: in Section \ref{section2} the causal
estimators introduced in \cite{num} are studied with extended parameters. The
convergence rate of these estimators are then studied. Finally, numerical
tests are given in Section~\ref{section3}. They help us to show the efficiency
and the stability of this proposed estimators. We will see that the numerical
integration error may also reduce this time-delay for a special class of functions.

\section{Derivative Estimations by Using Jacobi Orthogonal Series}

\label{section2}

Let $f^{\delta}=f+\varpi$ be a noisy function defined on an open
interval $I\subset\mathbb{R}$, where $f\in \mathcal{C}^{n}(I)$ with
$n\in\mathbb{N}$ and $\varpi$ be a noise\footnote{More generally,
the noise is a stochastic process, which is bounded with certain
probability and integrable in the sense of convergence in mean
square (see \cite{Num algo}).} which is bounded and integrable with
a noise level $\delta$, $i.e.$ $\delta=\displaystyle\sup_{x\in
I}|\varpi(x)|$. Contrary to \cite{S.K. Rangarajana} where Legendre
polynomials were used, we propose to use, as in \cite{num,num0},
truncated Jacobi orthogonal series so as to estimate the $n^{th}$
order derivative of $f$. In this section, we are going to give a
family of causal Jacobi estimators by using Jacobi polynomials
defined on $[0,1]$. From now on, we assume that the parameter $h>0$\
and we denote $I_{h}:=\left\{  x\in I;\left[  x-h,x\right]  \subset
I\right\}  $.

The $n^{th}$ order Jacobi polynomials (see \cite{G. Szego}) defined
on $[0,1]$ are defined  as follows
\begin{equation}
{P}_{n}^{(\alpha,\beta)}(t)=\displaystyle\sum_{j=0}^{n}\binom{n+\alpha}%
{j}\binom{n+\beta}{n-j}\left(  {t-1}\right)  ^{n-j}\left(  t\right)
^{j}\label{jacobi}%
\end{equation}
where $\alpha,\beta\in]-1,+\infty\lbrack$. Let $g_{1}$ and $\,g_{2}$
be two functions which belong to $\mathcal{C}([0,1])$, then we
define the scalar product of these functions by $$\left\langle
g_{1}(\cdot),g_{2}(\cdot)\right\rangle
_{\alpha,\beta}:=\int_{0}^{1}{w}_{\alpha,\beta}(t)g_{1}(t)g_{2}(t)dt,$$
where ${w}_{\alpha,\beta}(t)=(1-t)^{\alpha}t^{\beta}$ is a weighted
function. Hence, we can denote its associated norm by
$\Vert\cdot\Vert_{\alpha,\beta}$. We then have
\begin{equation}
\Vert{P}_{n}^{\left(  \alpha,\beta\right)  }\Vert_{\alpha,\beta}^{2}=\frac
{1}{2n+\alpha+\beta+1}\frac{\Gamma(\alpha+n+1)\Gamma(\beta+n+1)}{\Gamma
(\alpha+\beta+n+1)\Gamma(n+1)}.\label{norm2}%
\end{equation}
Let us recall two useful formulae (see \cite{G. Szego})
\begin{align}
{P}_{n}^{(\alpha,\beta)}(t){w}_{\alpha
,\beta}(t)&=\frac{(-1)^{n}}{n!}\frac{d^{n}}{dt^{n}}[{w}_{\alpha+n,\beta
+n}(t)] \quad (\text{the Rodrigues formula}),\label{rodrigues2}\\
\frac{d}{dt}[{P}_{n}^{(\alpha,\beta)}(t)]&=(n+\alpha+\beta
+1){P}_{n-1}^{(\alpha+1,\beta+1)}(t).\label{derivation}%
\end{align}

Let us ignore the noise $\varpi$ for a moment. Since $f$ is assumed
to belong to $\mathcal{C}^{n}(I)$, we define the $q^{th}$ ($q
\in\mathbb{N}$) order truncated Jacobi orthogonal series of
$f^{(n)}(x-ht)$ $(t \in[0,1])$ by the following operator: $\forall
x\in{I}_{h},$
\begin{equation}
\label{orthogonal_series2}{D}_{h,\alpha,\beta,q}^{(n)}f(x-th):=\sum_{i=0}%
^{q}\frac{\left\langle {P}_{i}^{(\alpha+n,\beta+n)}(\cdot),f^{(n)}(x-h
\cdot)\right\rangle _{\alpha+n,\beta+n}}{\Vert{P}_{i}^{(\alpha+n,\beta
+n)}\Vert^{2}_{\alpha+n,\beta+n}}\ {P}_{i}^{(\alpha+n,\beta+n)}(t).
\end{equation}
We also define the $(q+n)^{th}$ order truncated Jacobi orthogonal series of
$f(x-ht)$ $(t \in[0,1])$ by the following operator
\begin{equation}
\label{orthogonal2}\forall x\in{I}_{h},\ {D}^{(0)}_{h,\alpha,\beta
,q}f(x-th):=\sum_{i=0}^{q+n}\frac{\left\langle {P}_{i}^{(\alpha,\beta)}%
(\cdot),f(x-h \cdot)\right\rangle _{\alpha,\beta}}{\Vert{P}_{i}^{(\alpha
,\beta)}\Vert^{2}_{\alpha,\beta}}\ {P}_{i}^{(\alpha,\beta)}(t).
\end{equation}

It is easy to show that for each fixed value $x$, $D_{h,\alpha,\beta,q}%
^{(0)}f(x-h\cdot)$ is a polynomial which approximates the function
$f(x-h\cdot)$. We can see in the following lemma that
${D}_{h,\alpha,\beta ,q}^{(n)}f(x-h\cdot)$ is in fact connected to
the $n^{th}$ order derivative of
${D}_{h,\alpha,\beta,q}^{(0)}f(x-h\cdot)$. It can be expressed as an
integral of $f$.

\begin{lemma}
\label{proposition5} Let $f\in \mathcal{C}^{n}(I)$, then we have
\begin{equation}
\label{D3}\forall x\in{I}_{h},\ {D}_{h,\alpha,\beta,q}^{(n)}f(x-th)=
\frac{1}{(-h)^{n}} \frac {d^{n}}{d t^{n}}\left[
{D}^{(0)}_{h,\alpha,\beta,q}f(x-th)\right].
\end{equation}
Moreover, we have
\begin{equation}
\label{D_h_N3}\forall x\in{I}_{h},\ {D}_{h,\alpha,\beta,q}^{(n)}%
f(x-th)=\frac{1}{(-h)^{n} } \int_{0}^{1} {Q}_{\alpha,\beta,n,q,t}(\tau)
f(x-h\tau) d\tau,
\end{equation}
where ${Q}_{\alpha,\beta,n,q,t}(\tau) = {w}_{\alpha,\beta}(\tau)\displaystyle\sum_{i=0}^{q}%
C_{\alpha,\beta,n,i} P_{i}^{(\alpha+n,\beta+n)}(t)
{P}_{n+i}^{(\alpha,\beta)}(\tau)$, and

\noindent
$C_{\alpha,\beta,n,i}=\frac{(\beta+\kappa+2n+2i+1)\Gamma(\beta+\alpha+2n+i+1)\Gamma(n+i+1)}{\Gamma(\beta+n+i+1)\Gamma(\alpha+n+i+1)}
$ with $\alpha, \beta \in ]-1,+\infty[$.
\end{lemma}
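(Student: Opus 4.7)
The proof of the lemma has two parts, and both hinge on one preliminary identity that I would establish first by repeated integration by parts.

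\textbf{Preliminary step.} Starting from $\langle P_{i+n}^{(\alpha,\beta)}(\cdot),f(x-h\cdot)\rangle_{\alpha,\beta}$, I would use Rodrigues' formula \eqref{rodrigues2} to write $P_{i+n}^{(\alpha,\beta)}(\tau)w_{\alpha,\beta}(\tau)$ as $\frac{(-1)^{i+n}}{(i+n)!}\frac{d^{i+n}}{d\tau^{i+n}}w_{\alpha+i+n,\beta+i+n}(\tau)$, then integrate by parts $n$ times with respect to $\tau$. Since $w_{\alpha+i+n,\beta+i+n}$ vanishes together with its first $\lfloor\min(\alpha,\beta)\rfloor+i+n$ derivatives at both endpoints $\tau=0$ and $\tau=1$, all boundary terms are zero, and each step produces a factor $(-1)(-h)$ because $\frac{d}{d\tau}f(x-h\tau)=-hf'(x-h\tau)$. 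Applying Rodrigues' formula once more to the remaining factor $\frac{d^i}{d\tau^i}w_{(\alpha+n)+i,(\beta+n)+i}(\tau)=(-1)^i i!\,P_i^{(\alpha+n,\beta+n)}(\tau)w_{\alpha+n,\beta+n}(\tau)$ then yields
\begin{equation*}
\langle P_{i+n}^{(\alpha,\beta)}(\cdot),f(x-h\cdot)\rangle_{\alpha,\beta}
=\frac{(-h)^n\, i!}{(i+n)!}\,\langle P_i^{(\alpha+n,\beta+n)}(\cdot),f^{(n)}(x-h\cdot)\rangle_{\alpha+n,\beta+n}.
\end{equation*}

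\textbf{Proof of \eqref{D3}.} Differentiating $D^{(0)}_{h,\alpha,\beta,q}f(x-th)$ term by term $n$ times in $t$, formula \eqref{derivation} iterated gives $\frac{d^n}{dt^n}P_i^{(\alpha,\beta)}(t)=\frac{\Gamma(i+\alpha+\beta+n+1)}{\Gamma(i+\alpha+\beta+1)}P_{i-n}^{(\alpha+n,\beta+n)}(t)$ for $i\ge n$, and zero otherwise. Shifting the index $i\mapsto i+n$, one sees that $\frac{1}{(-h)^n}\frac{d^n}{dt^n}D^{(0)}_{h,\alpha,\beta,q}f(x-th)$ becomes a sum over $i=0,\dots,q$ of $P_i^{(\alpha+n,\beta+n)}(t)$ with explicit coefficients. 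Substituting the preliminary identity to convert the inner product $\langle P_{i+n}^{(\alpha,\beta)},f(x-h\cdot)\rangle_{\alpha,\beta}$ into $\langle P_i^{(\alpha+n,\beta+n)},f^{(n)}(x-h\cdot)\rangle_{\alpha+n,\beta+n}$, and simplifying the ratio of norms using \eqref{norm2} (noting the key simplification $2(i+n)+\alpha+\beta+1=2i+(\alpha+n)+(\beta+n)+1$), the coefficients collapse exactly to those appearing in \eqref{orthogonal_series2}, giving \eqref{D3}.

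\textbf{Proof of \eqref{D_h_N3}.} Starting from the definition \eqref{orthogonal_series2} of $D^{(n)}_{h,\alpha,\beta,q}f(x-th)$, I would invert the preliminary identity to rewrite each inner product as
$\langle P_i^{(\alpha+n,\beta+n)}(\cdot),f^{(n)}(x-h\cdot)\rangle_{\alpha+n,\beta+n}
=\frac{(i+n)!}{(-h)^n\, i!}\int_0^1 P_{i+n}^{(\alpha,\beta)}(\tau)w_{\alpha,\beta}(\tau)f(x-h\tau)\,d\tau$,
pull the factor $(-h)^{-n}$ out of the sum, and exchange summation and integration. The resulting kernel is $Q_{\alpha,\beta,n,q,t}(\tau)=w_{\alpha,\beta}(\tau)\sum_{i=0}^q C_{\alpha,\beta,n,i}P_i^{(\alpha+n,\beta+n)}(t)P_{i+n}^{(\alpha,\beta)}(\tau)$ with $C_{\alpha,\beta,n,i}=\frac{(i+n)!}{i!\,\|P_i^{(\alpha+n,\beta+n)}\|^2_{\alpha+n,\beta+n}}$. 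Plugging the explicit norm \eqref{norm2} and using $\Gamma(i+n+1)=(i+n)!$, $\Gamma(i+1)=i!$ gives precisely the stated $C_{\alpha,\beta,n,i}$.

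\textbf{Main obstacle.} The only delicate point is the bookkeeping in the preliminary step: one must verify that all boundary terms in the $n$-fold integration by parts vanish (which uses $\alpha+n+i,\beta+n+i>n-1$, guaranteed by $\alpha,\beta>-1$) and that the sign $(-1)^n$ from integration by parts combines correctly with $(-h)^n$ from differentiating $f(x-h\tau)$. The remaining computations are careful but routine algebraic manipulations of Gamma-function ratios.
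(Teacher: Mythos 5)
Your proposal is correct and follows essentially the same route as the paper's own proof: the key identity $\left\langle P_{i+n}^{(\alpha,\beta)}(\cdot),f(x-h\cdot)\right\rangle_{\alpha,\beta}=\frac{(-h)^{n}\,i!}{(i+n)!}\left\langle P_{i}^{(\alpha+n,\beta+n)}(\cdot),f^{(n)}(x-h\cdot)\right\rangle_{\alpha+n,\beta+n}$ obtained by applying the Rodrigues formula (\ref{rodrigues2}) twice with $n$ integrations by parts, the $n$-fold term-by-term differentiation of (\ref{orthogonal2}) via (\ref{derivation}), and the coefficient matching through (\ref{norm2}) are exactly the steps the paper carries out. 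Your computed constant also confirms that the $\kappa$ in the paper's $C_{\alpha,\beta,n,i}$ is a typo for $\alpha$; no further changes are needed.
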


\begin{proof}
By applying  $n$ times derivations to (\ref{orthogonal2}) and by
using (\ref{derivation}), we obtain
\begin{equation}\label{9}
\begin{split}
&  \frac{d^{n}}{dt^{n}}\left[  {D}_{h,\alpha,\beta,q}^{(0)}f(x-th)\right]  \\
= & \sum_{i=0}^{q}\frac{\left\langle {P}_{i+n}%
^{(\alpha,\beta)}(\cdot),f(x-h\cdot)\right\rangle _{\alpha,\beta}}{\Vert
{P}_{i+n}^{(\alpha,\beta)}\Vert_{\alpha,\beta}^{2}}\frac{d^{n}}{dt^{n}}\left[
{P}_{i+n}^{(\alpha,\beta)}(t)\right]  \\
= & \sum_{i=0}^{q}\frac{\left\langle {P}_{i+n}%
^{(\alpha,\beta)}(\cdot),f(x-h\cdot)\right\rangle _{\alpha,\beta}}{\Vert
{P}_{i+n}^{(\alpha,\beta)}\Vert_{\alpha,\beta}^{2}}\frac{\Gamma(\alpha
+\beta+2n+i+1)}{\Gamma(\alpha+\beta+n+i+1)}{P}_{i}^{(\alpha+n,\beta+n)}(t).
\end{split}
\end{equation}
By applying two times the Rodrigues formula given in (\ref{rodrigues2}) and by
taking $n$ integrations by parts, we get
\[%
\begin{split}
&  \left\langle {P}_{i}^{(\alpha+n,\beta+n)}(\cdot),f^{(n)}(x-h\cdot
)\right\rangle _{\alpha+n,\beta+n}\\
= &  \int_{0}^{1}{w}_{\alpha+n,\beta+n}(\tau){P}_{i}^{(\alpha+n,\beta+n)}%
(\tau)\,f^{(n)}(x-h\tau)d\tau\\
= &  \int_{0}^{1}\frac{(-1)^{i}}{i!}{w}_{\alpha+n+i,\beta+n+i}^{(i)}%
(\tau)\,f^{(n)}(x-h\tau)d\tau\\
= &  \frac{1}{(-h)^{n}}\int_{0}^{1}\frac{(-1)^{i+n}}{i!}{w}_{\alpha
+n+i,\beta+n+i}^{(n+i)}(\tau)\,f(x-h\tau)d\tau\\
= &  \frac{1}{(-h)^{n}}\int_{0}^{1}\frac{(n+i)!}{i!}{w}_{\alpha,\beta}%
(\tau){P}_{n+i}^{(\alpha,\beta)}(\tau)\,f(x-h\tau)d\tau.
\end{split}
\]
Then, after some calculations by using (\ref{norm2}) we can obtain
\begin{equation}%
\begin{split}
&  \frac{\left\langle {P}_{i}^{(\alpha+n,\beta+n)}(\cdot),f^{(n)}%
(x-h\cdot)\right\rangle _{\alpha+n,\beta+n}}{\Vert{P}_{i}^{(\alpha+n,\beta
+n)}\Vert_{\alpha+n,\beta+n}^{2}}\\
= &  \frac{\left\langle {P}_{i+n}^{(\alpha,\beta)}(\cdot),f(x-h\cdot
)\right\rangle _{\alpha,\beta}}{(-h)^{n}\Vert{P}_{n+i}^{(\alpha,\beta)}%
\Vert_{\alpha,\beta}^{2}}\frac{\Gamma(\alpha+\beta+2n+i+1)}{\Gamma
(\alpha+\beta+n+i+1)}.
\end{split}
\end{equation}
Finally, by taking (\ref{orthogonal_series2}) and (\ref{9}) we obtain
\begin{equation}
\forall x\in{I}_{h},\ {D}_{h,\alpha,\beta,q}^{(n)}f(x-th)= \frac{1}{(-h)^{n} }\frac{d^{n}}{dt^{n}%
}\left[  {D}_{h,\alpha,\beta,q}^{(0)}f(x-th)\right]  .
\end{equation}
The proof is complete. \qed
\end{proof}

If we consider the noisy function $f^{\delta}$, then it is sufficient to
replace $f(x-h\cdot)$ in $(\ref{D_h_N3})$ by $f^{\delta}(x-h\cdot)$. In
\cite{num}, for a given value $t_{\tau} \in [0,1]$, ${D}_{h,\alpha,\beta,q}%
^{(n)}f^{\delta}(x-t_{\tau}h)$ (with $\alpha,\beta\in\mathbb{N}$ and
$q\leq\alpha+n$) was proposed as a point-wise estimate of
$f^{(n)}(x)$ by admitting a time-delay $t_{\tau}h$. We assume here
that these values $\alpha$ and $\beta$ belong to $\left]
-1,+\infty\right[$. This is possible due to the definition of the
Jacobi polynomials. Contrary to \cite{num}, we do not have
constraints on the value of the truncation order $q$. Moreover, the
function ${Q}_{\alpha,\beta,n,q,t}$ is  easier to calculate than the
one given in \cite{Jcam}. Thus, we can define the extended
point-wise estimators as follows.

\begin{definition}
Let $f^{\delta}=f+\varpi$ be a noisy function, where $f\in C^{n}(I)$
and $\varpi$ be a bounded and integrable noise with a noise level
$\delta$. Then a family of causal Jacobi estimators of $f^{(n)}$ is
defined as
\begin{equation}
\forall x\in{I}_{h},\ {D}_{h,\alpha,\beta,q}^{(n)}f^{\delta}(x-t_{\tau
}h)=\frac{1}{(-h)^{n}}\int_{0}^{1}{Q}_{\alpha,\beta,n,q,t_{\tau}}%
(u)f^{\delta}(x-h u)d u,\label{D_h_N2}%
\end{equation}
where $\alpha,\beta\in]-1,+\infty\lbrack$, $q,n\in\mathbb{N}$ and $t_{\tau}$
is a fixed value on $[0,1]$.
\end{definition}

Hence, the estimation error comes from two sources : the remainder
terms in the Jacobi series expansion of $f^{(n)}(x-h\cdot)$ and the
noise part. In the following proposition, we study these estimation
errors.

\begin{proposition}
\label{proposition6} Let $f^{\delta}$ be a noisy function where $f
\in \mathcal{C}^{n+1+q}(I)$ and $\varpi$ be a bounded and integrable
noise with a noise level $\delta$.  Assume that there exists
${M}_{n+1+q}>0$ such that for any $x\in I$,
$\left|f^{(n+q+1)}(x)\right|\leq {M}_{n+1+q}$, then
\begin{equation}
\left\Vert
{D}_{h,\alpha,\beta,{q}}^{(n)}f^{\delta}(x-t_{\tau}h)-f^{(n)}(x-t_{\tau}h)\right\Vert
_{\infty}\leq C_{q,t_{\tau}}h^{q+1}+E_{q,t_{\tau}}\frac{\delta}{h^{n}},
\end{equation}
where $C_{q,t_{\tau}}=M_{n+1+q}\left(\frac{1}{(n+1+q)!}\int_{0}^{1}
\left|u^{n+1+q} {Q}_{\alpha,\beta,n,q,t_{\tau}}(u)\right| \,
d u+ \frac{t_{\tau}^{q+1}}{(q+1)!}\right)$ and \\
\noindent$E_{q,t_{\tau}}=\int
_{0}^{1}\left|{Q}_{\alpha,\beta,n,q,t_{\tau}}(u)\right|\,d u.$
Moreover, if we choose ${h}=\left[
\frac{nE_{q,t_{\tau}}}{(q+1)C_{q,t_{\tau}}}\delta\right]
^{\frac{1}{n+q+1}}$, then we have
\begin{equation}
\left\Vert
{D}_{h,\alpha,\beta,{q}}^{(n)}f^{\delta}(x-t_{\tau}h)-f^{(n)}(x-t_{\tau}h)\right\Vert
_{\infty}=\mathcal{O}(\delta^{\frac{q+1}{n+1+q}}).
\end{equation}
\end{proposition}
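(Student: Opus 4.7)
My plan is to split the total error into a deterministic bias and a noise contribution by exploiting the linearity of the integral representation $(\ref{D_h_N2})$. Writing $f^\delta = f + \varpi$ gives $D_{h,\alpha,\beta,q}^{(n)}f^\delta(x-t_\tau h) - f^{(n)}(x-t_\tau h) = \bigl(D_{h,\alpha,\beta,q}^{(n)}f(x-t_\tau h) - f^{(n)}(x-t_\tau h)\bigr) + D_{h,\alpha,\beta,q}^{(n)}\varpi(x-t_\tau h)$, and I will bound each piece separately before minimising the sum in $h$.

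For the bias, I would Taylor-expand $f$ to degree $n+q$ around $x$: set $T(u):=\sum_{k=0}^{n+q}\frac{(-hu)^k}{k!}f^{(k)}(x)$ so that, by the Lagrange form, $R(u):=f(x-hu)-T(u)=\frac{(-hu)^{n+q+1}}{(n+q+1)!}f^{(n+q+1)}(\xi_u)$. Since $T$ is a polynomial of degree $n+q$ in $u$, its degree-$(n+q)$ Jacobi expansion in $(\ref{orthogonal2})$ reproduces it exactly, and Lemma~\ref{proposition5} then gives $D_{h,\alpha,\beta,q}^{(n)}T(x-t_\tau h)=(-h)^{-n}T^{(n)}(t_\tau)$, which after differentiating and re-collecting powers of $-ht_\tau$ coincides with the degree-$q$ Taylor polynomial of $f^{(n)}$ evaluated at $x-t_\tau h$. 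Hence the bias splits as (i) the Taylor remainder of $f^{(n)}$ at order $q$ at the point $x-t_\tau h$, which using $|f^{(n+q+1)}|\le M_{n+1+q}$ is bounded by $\frac{t_\tau^{q+1}}{(q+1)!}M_{n+1+q}\,h^{q+1}$, and (ii) $D_{h,\alpha,\beta,q}^{(n)}R(x-t_\tau h)$, which after substitution into the kernel form $(\ref{D_h_N3})$ and the same derivative bound yields $\frac{M_{n+1+q}}{(n+q+1)!}\,h^{q+1}\int_0^1|u^{n+1+q}Q_{\alpha,\beta,n,q,t_\tau}(u)|\,du$. Summing (i) and (ii) recovers exactly $C_{q,t_\tau}\,h^{q+1}$.

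For the noise, I would insert $\varpi$ into $(\ref{D_h_N2})$, move the absolute value inside the integral, and use $|\varpi|\le\delta$ to produce $|D_{h,\alpha,\beta,q}^{(n)}\varpi(x-t_\tau h)|\le \delta h^{-n}\int_0^1|Q_{\alpha,\beta,n,q,t_\tau}(u)|\,du = E_{q,t_\tau}\,\delta/h^n$. Combining the two bounds proves the first assertion. For the second, I would minimise $g(h):=C_{q,t_\tau}h^{q+1}+E_{q,t_\tau}\,\delta/h^n$ over $h>0$ by setting $g'(h)=0$, which yields $(q+1)C_{q,t_\tau}h^{q}=nE_{q,t_\tau}\,\delta/h^{n+1}$, i.e.\ $h^{n+q+1}=\frac{nE_{q,t_\tau}}{(q+1)C_{q,t_\tau}}\delta$ as announced; substituting this $h$ back makes both terms proportional to $\delta^{(q+1)/(n+q+1)}$.

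The main obstacle I anticipate is the bookkeeping step identifying $D_{h,\alpha,\beta,q}^{(n)}T(x-t_\tau h)$ with the degree-$q$ Taylor polynomial of $f^{(n)}$ at $x-t_\tau h$: this requires both representations of the operator supplied by Lemma~\ref{proposition5}, together with the exactness of the degree-$(n+q)$ Jacobi expansion on polynomials of degree at most $n+q$ and a careful shift of summation index after applying $d^n/dt^n$. Once this identification is in place, the two explicit pieces composing $C_{q,t_\tau}$ appear naturally and the rest of the argument reduces to Taylor estimation, direct $L^1$ control of the kernel $Q_{\alpha,\beta,n,q,t_\tau}$, and a single-variable calculus minimisation.
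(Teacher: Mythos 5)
Your proposal follows essentially the same route as the paper's own proof: decompose the error into bias plus noise, use the exact reproduction of the degree-$(n+q)$ Taylor polynomial by the operator to split the bias into the kernel-weighted Taylor remainder and the order-$q$ Taylor remainder of $f^{(n)}$, bound the noise term by $E_{q,t_\tau}\delta/h^n$, and minimise the resulting bound in $h$. The argument is correct (and your remark that $\xi$ should depend on the integration variable $u$ is actually slightly more careful than the paper's notation).
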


\begin{proof}
By taking the Taylor series expansion  of $f$ at $x$, we then have
for any $x\in{I}_{h}$ that there exists ${\xi}\in]x-h,x[$ such that
\begin{equation}
f(x-t_{\tau}h)=f_{n+q}(x-t_{\tau}h)+\frac{(-h)^{n+1+q}t_{\tau}^{n+1+q}%
}{(n+1+q)!}f^{(n+1+q)}({\xi}),\label{Taylor3}%
\end{equation}
where $f_{n+q}(x-t_{\tau}h)=\displaystyle\sum_{j=0}^{n+q}\frac{(-h)^{j}%
t_{\tau}^{j}}{j!}f^{(j)}(x)$ is the $(n+q)^{th}$ order truncated Taylor series
expansion of $f(x-t_{\tau}h)$. By using $(\ref{D_h_N3})$ with $f_{n+q}%
(x-h\cdot)$ we obtain
\begin{equation}
f_{n+q}^{(n)}(x-t_{\tau}h)=\frac{1}{(-h)^{n}}\int_{0}^{1}{Q}_{\alpha
,\beta,n,q,t_{\tau}}(\tau)f_{n+q}(x-h\tau)d\tau.\label{f_N2}%
\end{equation}
Thus, by using $(\ref{D_h_N2})$ and $(\ref{f_N2})$ we obtain
\[%
\begin{split}
&  {D}_{h,\alpha,\beta,{q}}^{(n)}f(x-t_{\tau}h)-f_{n+q}^{(n)}(x-t_{\tau}h)\\
= &  \frac{(-h)^{q+1}}{(n+1+q)!}\int_{0}^{1}{Q}_{\alpha,\beta,n,q,t_{\tau}%
}(\tau)\tau^{n+1+q}f^{(n+1+q)}({\xi})d\tau.
\end{split}
\]
Consequently, if for any $x\in I$ $\left\vert f^{(n+1+q)}(x)\right\vert \leq
M_{n+1+q}$, then by taking the $q^{th}$ order truncated Taylor series
expansion of $f^{(n)}(x-t_{\tau}h)$
\[
f^{(n)}(x-t_{\tau}h)=f_{n+q}^{(n)}(x-t_{\tau}h)+\frac{(-h)^{1+q}t_{\tau}%
^{1+q}}{(1+q)!}f^{(n+1+q)}(\hat{\xi}),
\]
we have
\begin{equation} \label{bias2}
\begin{split}
&  \left\Vert {D}_{h,\alpha,\beta,{q}}^{(n)}f(x-t_{\tau}h)-f^{(n)}(x-t_{\tau
}h)\right\Vert _{\infty}\\
\leq &  \left\Vert {D}_{h,\alpha,\beta,{q}}^{(n)}f(x-t_{\tau}h)-f_{n+q}%
^{(n)}(x-t_{\tau}h)\right\Vert _{\infty}+\left\Vert f_{n+q}^{(n)}(x-t_{\tau
}h)-f^{(n)}(x-t_{\tau}h)\right\Vert _{\infty}\\
\leq &  h^{q+1}M_{n+1+q}\left(  \frac{1}{(n+1+q)!}\int_{0}^{1}\left\vert
\tau^{n+1+q}{Q}_{\alpha,\beta,n,q,t_{\tau}}(\tau)\right\vert \,d\tau
+\frac{t_{\tau}^{q+1}}{(q+1)!}\right).
\end{split}
\end{equation}
Since
\[%
\begin{split}
&  \left\Vert {D}_{h,\alpha,\beta,{q}}^{(n)}f^{\delta}(x-t_{\tau}%
h)-{D}_{h,\alpha,\beta,{q}}^{(n)}f(x-t_{\tau}h)\right\Vert _{\infty}\\
& \hskip 3pc~=  \left\Vert {D}_{h,\alpha,\beta,{q}}^{(n)}\left[  f^{\delta}(x-t_{\tau
}h)-f(x-t_{\tau}h)\right]  \right\Vert _{\infty}\\
&  \hskip 3pc\leq   \frac{\delta}{h^{n}}\int_{0}^{1}\left\vert Q_{\alpha,\beta,n,q}%
(\tau)\right\vert \,d\tau,
\end{split}
\]
by using (\ref{bias2}) we get
\[%
\begin{split}
&  \left\Vert {D}_{h,\alpha,\beta,{q}}^{(n)}f^{\delta}(x-t_{\tau}%
h)-f^{(n)}(x-t_{\tau}h)\right\Vert _{\infty}\\
& \hskip 3pc \leq   \left\Vert {D}_{h,\alpha,\beta,{q}}^{(n)}f^{\delta}(x-t_{\tau}%
h)-{D}_{h,\alpha,\beta,{q}}^{(n)}f(x-t_{\tau}h)\right\Vert _{\infty}\\
& \hskip 5pc  +\left\Vert {D}_{h,\alpha,\beta,{q}}^{(n)}f(x-t_{\tau}h)-f^{(n)}(x-t_{\tau
}h)\right\Vert _{\infty}\\
& \hskip 3pc \leq   C_{q,t_{\tau}}h^{q+1}+E_{q,t_{\tau}}\frac{\delta}{h^{n}},
\end{split}
\]
where $C_{q,t_{\tau}}=M_{n+1+q}\left(  \frac{1}{(n+1+q)!}\int_{0}%
^{1}\left\vert \tau^{n+1+q}{Q}_{\alpha,\beta,n,q,t_{\tau}}(\tau)\right\vert
\,d\tau+\frac{t_{\tau}^{q+1}}{(q+1)!}\right)  $ and \newline\noindent
$E_{q,t_{\tau}}=\int_{0}^{1}\left\vert {Q}_{\alpha,\beta,n,q,t_{\tau}}%
(\tau)\right\vert \,d\tau.$

Let us denote the error bound by $\psi(h)=C_{q,t_{\tau}}h^{q+1}+E_{q,t_{\tau}%
}\frac{\delta}{h^{n}}$. Consequently, we can calculate its minimum value. It
is obtained for ${h}^{*}=\left[  \frac{nE_{q,t_{\tau}}}{(q+1)C_{q,t_{\tau}}%
}\delta\right] ^{\frac{1}{n+q+1}}$ and
\begin{equation}
\psi({h}^{*})=\frac{n+1+q}{q+1}\left(  \frac{q+1}{n}\right)  ^{\frac{n}%
{n+1+q}}C_{q,t_{\tau}}^{\frac{n}{n+1+q}}E_{q,t_{\tau}}^{\frac{q+1}{n+1+q}%
}\delta^{\frac{q+1}{n+1+q}}.
\end{equation}
The proof is complete. \qed
\end{proof}

Let us mention that if we set $t_{\tau}=\theta_{q+1}$, the smallest root of
the Jacobi polynomial ${P}_{q+1}^{(\alpha+n,\beta+n)}$ in
(\ref{orthogonal_series2}), then ${D}_{h,\alpha,\beta,q}^{(n)}f(x-\theta
_{q+1}h)$ becomes the $(q+1)^{th}$ order truncated Jacobi orthogonal series of
$f^{(n)}(x-\theta_{q+1}h)$. Hence, we have the following corollary.

\begin{corollary}
\label{corollary5} Let $f\in \mathcal{C}^{n+2+q}(I)$ where $q$ is an
integer. If we set $t_{\tau}=\theta_{q+1}$, the smallest root of the
Jacobi  polynomial ${P}_{q+1}^{(\alpha+n,\beta+n)}$ in
(\ref{D_h_N2}) and we assume that there
exists ${M}_{n+2+q}>0$ such that for any $x\in I$, $\left\vert f^{(n+q+2)}%
(x)\right\vert \leq{M}_{n+2+q}$, then we have
\[
\left\Vert {D}_{h,\alpha,\beta,{q}}^{(n)}f^{\delta}(x-\theta_{q+1}%
h)-f^{(n)}(x-\theta_{q+1}h)\right\Vert _{\infty}\leq C_{q,\theta_{q+1}}%
h^{q+2}+E_{q,\theta_{q+1}}\frac{\delta}{h^{n}},
\]
where $C_{q,\theta_{q+1}}={M}_{n+2+q}\left(  \frac{1}{(n+q+2)!}\int_{0}%
^{1}|\tau^{n+q+2}{Q}_{\alpha,\beta,n,q,\theta_{q+1}}(\tau)|\,d\tau
+\frac{t^{q+2}}{(q+2)!}\right)  $ and $E_{q,\theta_{q+1}}$ is given
in Proposition \ref{proposition6}.
Moreover, if we choose $\hat{{h}}=\left[  \frac{nE_{q,\theta_{q+1}}}%
{(q+2)C_{q,\theta_{q+1}}}\delta\right]  ^{\frac{1}{n+q+2}}$, then we
have
\[
\left\Vert {D}_{h,\alpha,\alpha,{q}}^{(n)}f^{\delta}(x-\theta_{q+1}%
h)-f^{(n)}(x-\theta_{q+1}h)\right\Vert _{\infty}=\mathcal{O}(\delta^{\frac{q+2}{n+2+q}%
}).
\]

\end{corollary}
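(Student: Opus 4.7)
The plan is to imitate the proof of Proposition~\ref{proposition6} almost verbatim, using one crucial observation to pick up an extra factor of $h$ in the bias. The observation, made just before the statement of the corollary, is that setting $t_{\tau}=\theta_{q+1}$ kills the $(q+1)^{th}$ term one would add in passing from a $q^{th}$- to a $(q+1)^{th}$-order truncation of the Jacobi expansion. Concretely, from the explicit expression of $Q_{\alpha,\beta,n,q,t}$ given in Lemma~\ref{proposition5} I would verify the identity
\[
Q_{\alpha,\beta,n,q,\theta_{q+1}}(\tau)=Q_{\alpha,\beta,n,q+1,\theta_{q+1}}(\tau)\qquad\forall\,\tau\in[0,1],
\]
since the only term added in passing from $q$ to $q+1$ carries the factor $P_{q+1}^{(\alpha+n,\beta+n)}(\theta_{q+1})=0$. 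Integrating against $f^{\delta}(x-h\cdot)$, this yields ${D}_{h,\alpha,\beta,q}^{(n)}f^{\delta}(x-\theta_{q+1}h)={D}_{h,\alpha,\beta,q+1}^{(n)}f^{\delta}(x-\theta_{q+1}h)$, so the estimator we are analysing is secretly a $(q+1)^{th}$-order truncation evaluated at the delay $\theta_{q+1}h$.

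With this identity in hand the bias term is handled by repeating the Taylor-expansion step of Proposition~\ref{proposition6} with $q$ replaced by $q+1$: I would Taylor-expand $f$ at $x$ up to order $n+q+1$ with Lagrange remainder controlled by $M_{n+q+2}$, apply (\ref{D_h_N3}) to the truncated Taylor polynomial $f_{n+q+1}$ (which exhibits the reproducing property of $D_{h,\alpha,\beta,q+1}^{(n)}$ on polynomials of degree $\leq n+q+1$), and combine with the $(q+1)^{th}$-order Taylor expansion of $f^{(n)}(x-\theta_{q+1}h)$ itself. The kernel appearing under the integral can be written as $Q_{\alpha,\beta,n,q,\theta_{q+1}}$ or as $Q_{\alpha,\beta,n,q+1,\theta_{q+1}}$ by the identity above, which is what allows the statement to keep the label $q$ in the constants while the order in $h$ rises to $q+2$; this produces the bound $C_{q,\theta_{q+1}}\,h^{q+2}$ with exactly the constant announced.

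The noise contribution is dealt with exactly as in Proposition~\ref{proposition6}: linearity of the estimator together with $\sup_{x\in I}|\varpi(x)|\leq\delta$ gives $E_{q,\theta_{q+1}}\,\delta/h^{n}$, where again the label $q$ and the label $q+1$ coincide at $\theta_{q+1}$. Summing the two contributions produces the error bound $\psi(h)=C_{q,\theta_{q+1}}h^{q+2}+E_{q,\theta_{q+1}}\,\delta/h^{n}$, and a one-line minimisation ($\psi'(h)=0$) delivers $\hat h=\bigl[nE_{q,\theta_{q+1}}\delta/((q+2)C_{q,\theta_{q+1}})\bigr]^{1/(n+q+2)}$; substituting back immediately gives the rate $\mathcal{O}\!\left(\delta^{(q+2)/(n+q+2)}\right)$.

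The only non-routine step is the kernel identity of the first paragraph; everything else is a mechanical repetition of Proposition~\ref{proposition6} with the shift $q\mapsto q+1$ on the bias side. What makes the statement nontrivial, and justifies the extra factor of $h$ relative to Proposition~\ref{proposition6}, is precisely that this shift costs nothing on the noise side, because evaluating at the root $\theta_{q+1}$ collapses the $q$- and $(q+1)$-truncations into each other.
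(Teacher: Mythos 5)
Your proposal is correct and follows essentially the same route as the paper: the paper's proof likewise rests on the single observation that at $t_{\tau}=\theta_{q+1}$ the $q^{th}$- and $(q+1)^{th}$-order truncations coincide (the added term carrying the factor ${P}_{q+1}^{(\alpha+n,\beta+n)}(\theta_{q+1})=0$), and then repeats the argument of Proposition~\ref{proposition6} with the Taylor expansion pushed one order higher. Your kernel-level formulation of this identity and the explicit remark about why the constants may keep the label $q$ are faithful elaborations of what the paper leaves implicit.
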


\begin{proof}
If $t_{\tau}=\theta_{q+1}$, the smallest root of polynomial ${P}%
_{q+1}^{(\alpha+n,\beta+n)}$ in (\ref{orthogonal_series2}), then we
have
\[ \begin{split}
&{D}_{h,\alpha,\beta,q}^{(n)}f(x-\theta_{q+1}h) \\
&\hskip 5pc =\sum_{i=0}^{q+1}\frac
{\left\langle {P}_{i}^{(\alpha+n,\beta+n)}(\cdot),f^{(n)}(x-h\cdot
)\right\rangle _{\alpha+n,\beta+n}}{\Vert{P}_{i}^{(\alpha+n,\beta+n)}%
\Vert_{\alpha+n,\beta+n}^{2}}\
{P}_{i}^{(\alpha+n,\beta+n)}(\theta_{q+1}).
\end{split}\]
This proof can be completed by taking the $(n+q+1)^{th}$ order truncated
Taylor series expansion of $f$ as it was done in Proposition
\ref{proposition6}. \qed
\end{proof}


The numerical calculation of $E_{q,\theta_{q+1}}$ for $q$,
$n\in\mathbb{N}$ and $\alpha,\beta\in]-1,10]$ shows that
$E_{q,\theta_{q+1}}$ increases with respect to $q$. Hence, in order
to reduce the noise influence it is preferable to choose $q$ as
small as possible. However, $C_{q,\theta_{q+1}}$ decreases with
respect to $q$. A compromise consists in choosing $q=2$. If we take
$D_{h,\alpha ,\beta,{2}}^{(n)}f^{\delta}(x-\theta_{2}h)$ as an
estimator of $f^{(n)}(x)$, then we produce a time-delay
$\theta_{2}h$. For this choice of $\theta_{2}$, we have
$D_{h,\alpha,\beta,{1}}^{(n)}f^{\delta}(x-\theta_{2}h)=D_{h,\alpha
,\beta,{2}}^{(n)}f^{\delta}(x-\theta_{2}h)$. We can see that the
estimators $D_{h,\alpha ,\beta,{2}}^{(n)}f^{\delta}(x)$ do not
produce a time-delay but $C_{1,\theta _{2}}<C_{2,0}$ and
$E_{1,\theta_{2}}<E_{2,0}$. This generally introduces more important
estimation errors. Consequently, so as to
estimate $f^{(n)}(x)$ we use $D_{h,\alpha,\beta,{1}%
}^{(n)}f^{\delta}(x-\theta_{2}h)$ which presents a time-delay.

\section{Numerical Experiments}

\label{section3} In order to show the efficiency and the stability
of the previously proposed estimators, we give some numerical
results in this section.

From now on, we assume that
$f^{\delta}(x_{i})=f(x_{i})+c\varpi(x_{i})$ with $x_{i}=T_{s}i$ for
$i=0,\cdots,500$ ($T_{s}=\frac{1}{100}$), is a noisy measurement of
$f(x)=\exp(\frac{-x}{1.2})\sin(6x+\pi)$. The noise $c\varpi(x_{i})$
is simulated from a zero-mean white Gaussian $iid$ sequence by using
the Matlab function 'randn' with STATE reset to $0$. Coefficient $c$
is adjusted in such a way that the signal-to-noise ratio $SNR=10\log
_{10}\left(
\frac{\sum|y(t_{i})|^{2}}{\sum|c\varpi(t_{i})|^{2}}\right)  $ is
equal to $SNR=22.2\text{dB}$ (see, e.g., \cite{R17} for this well
known concept in signal processing). By using the well known
three-sigma rule, we can assume that the noise level for $c\varpi$
is equal to $3c$. We can see the noisy signal in Figure
\ref{figure_signal}. We use the trapezoidal method in order to
approximate the integrals in our estimators where we use $m+1$
discrete values. The estimated derivatives of $f$ at $x_{i}\in
I=[0,5]$ are calculated from the noise data $f^{\delta}(x_{j})$ with
$x_{j}\in\lbrack -x_{i}-h,x_{i}]$ where $h=mT_{s}$.

\begin{figure}
\centering {\includegraphics[scale=0.6]{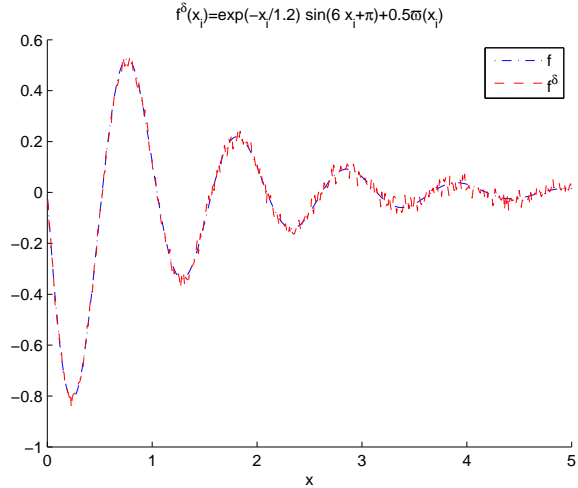}}\caption{ Signal
and noisy
signal.}%
\label{figure_signal}%
\end{figure}

We can see the estimation results for the first order derivative of
$f$ in Figure \ref{simulation0}. The corresponding estimation errors
are given in Figure \ref{simulation1} and in Figure
\ref{simulation2}. We can see that the estimate given by the causal
Jacobi estimator with integer parameters introduced in \cite{num}
(dash line), produces a time-delay of value $\theta_{q+1}h=0.11$.
The estimate given by the causal Jacobi estimator with extended
parameters (dotted line) is time-delay free. Firstly, the root
values $\theta_{q+1}$ for $q=0,1$ can be reduced with the extended
negative parameters, so does the time-delay. Secondly, the numerical
integration method with a negative value for $\beta$ produces a
numerical error which allows us to finally compensate this reduced
time-delay. This last phenomena is due to the fact that the initial
function $f$ satisfies the following differential equation
$f^{(2)}+kf=g$ where $k\in\mathbb{R}$ and $g$ is a continuous
function. Consequently, in the case of  the first order derivative
estimations, we can verify that this numerical error which depends
on $f$ may reduce the effect of the error due to the truncation in
the Jacobi series expansion.  This is due to the fact that the
truncation error depends on $f^{(2)}$. Hence, the final total error
is $\mathcal{O}(g)$. Finally, since
$D_{mT_{s},\alpha,\beta,q}^{(1)}f^{\delta}(x_{i}-\theta_{2}h)$
produces a
time-delay of value $\theta_{2}h$, we give  in Figure \ref{simulation3} the errors
$D_{mT_{s}%
,\alpha,\beta,q}^{(1)}f^{\delta}(x_{i}-\theta_{2}h)-f^{(1)}(x_{i}-\theta
_{2}h)$.

\begin{figure}[h!]
\centering
{\includegraphics[scale=0.6]{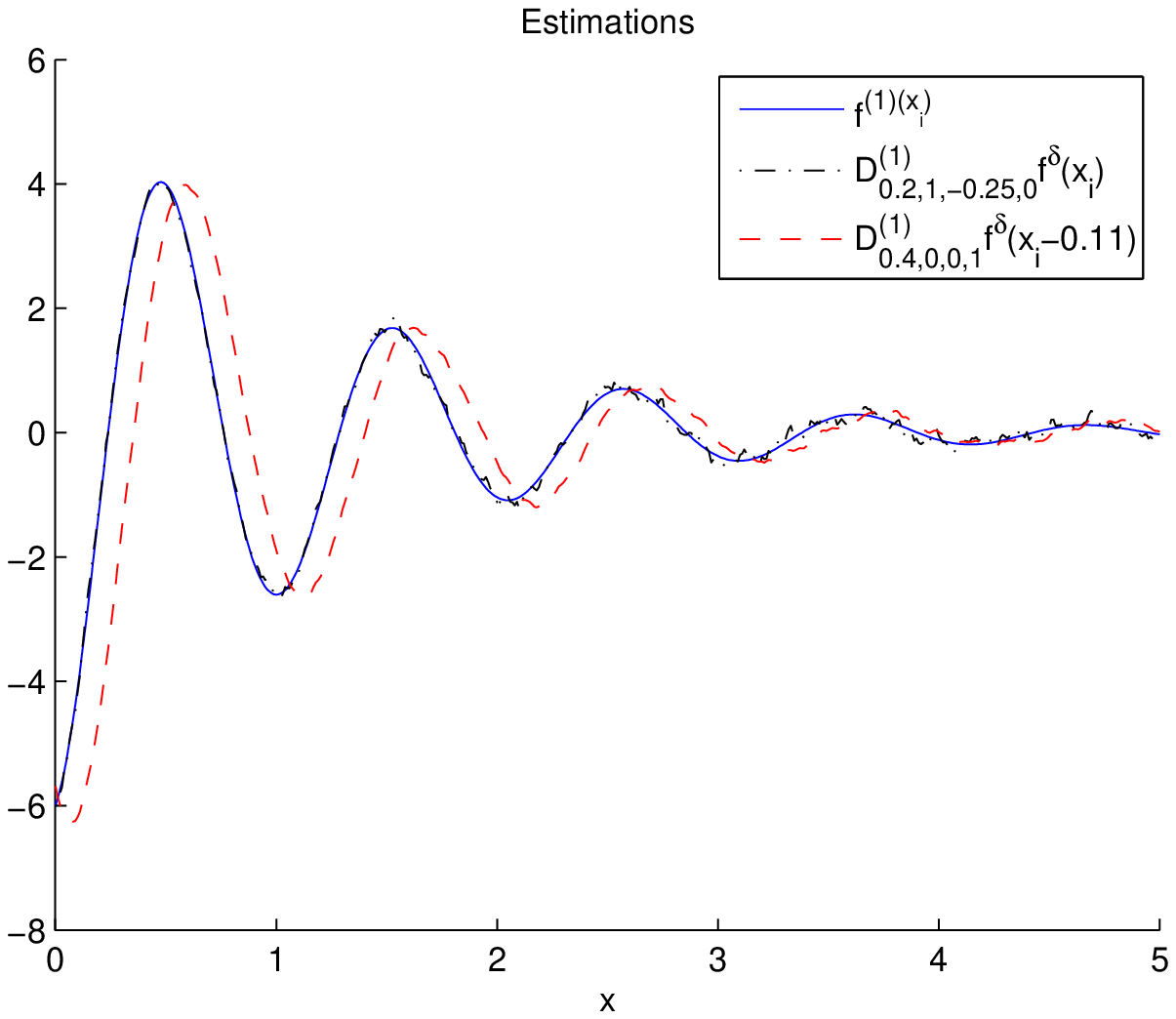}}\caption{Estimations
by Jacobi estimators $D_{h,\alpha,\beta,q}^{(1)}f^{\delta}(x_{i})$
with $h=0.2, \beta=-0.25, \alpha=1, q=0$ and
$D_{h,\alpha,\beta,q}^{(1)}f^{\delta
}(x_{i}-\theta_{2}h)$ with $h=0.4, \alpha=\beta=0, q=1, \theta_{2}=0.276$.}%
\label{simulation0}%
\vskip 2pc
\centering {\includegraphics[scale=0.6]{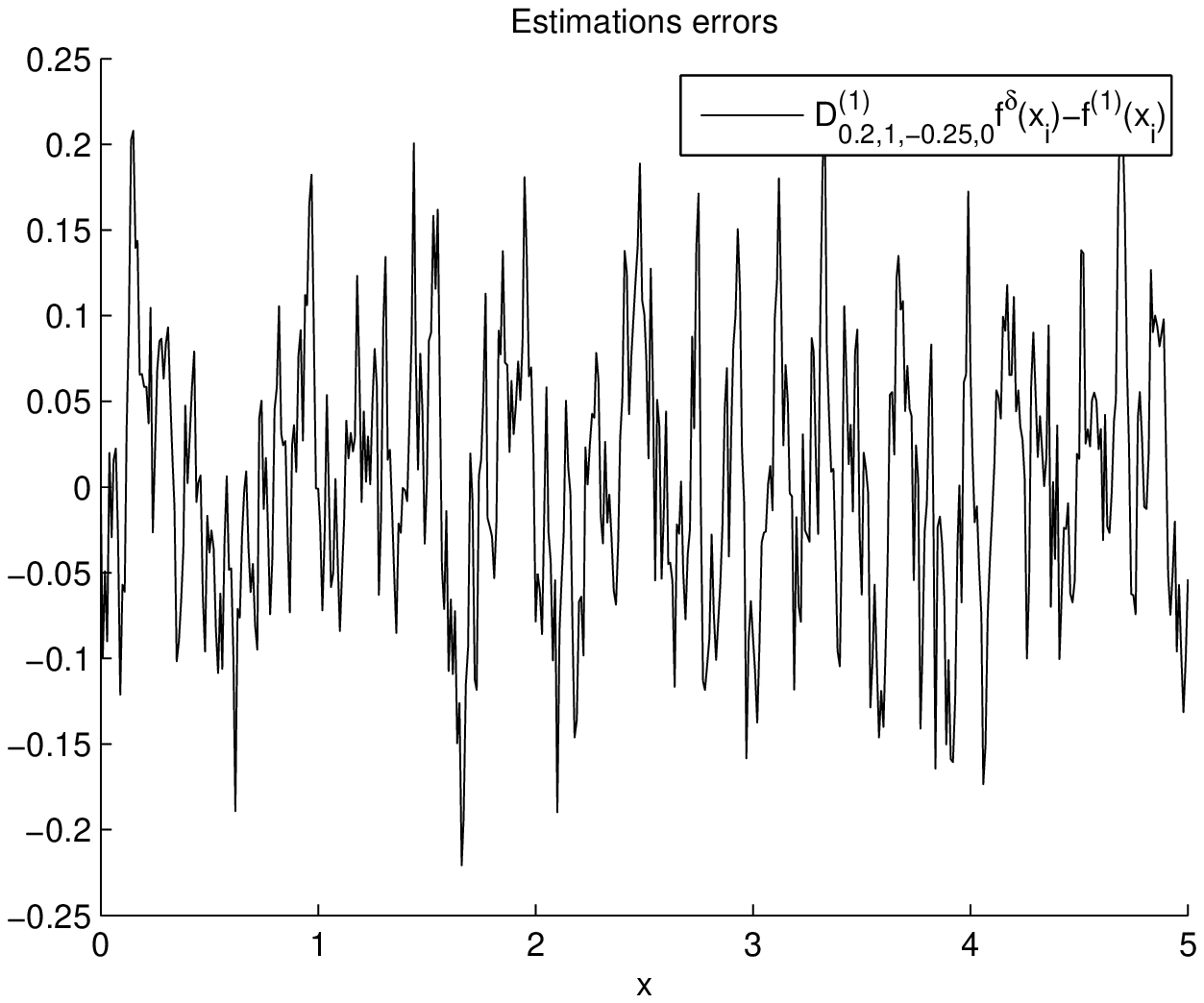}}%
\caption{$D_{h,\alpha,\beta,q}^{(1)}f^{\delta}(x_{i})-f^{(1)}(x_{i})$
with
$h=0.2, \beta=-0.25, \alpha=1, q=0$.}%
\label{simulation1}%
\end{figure}

\begin{figure}[t!]
\centering {\includegraphics[scale=0.6]{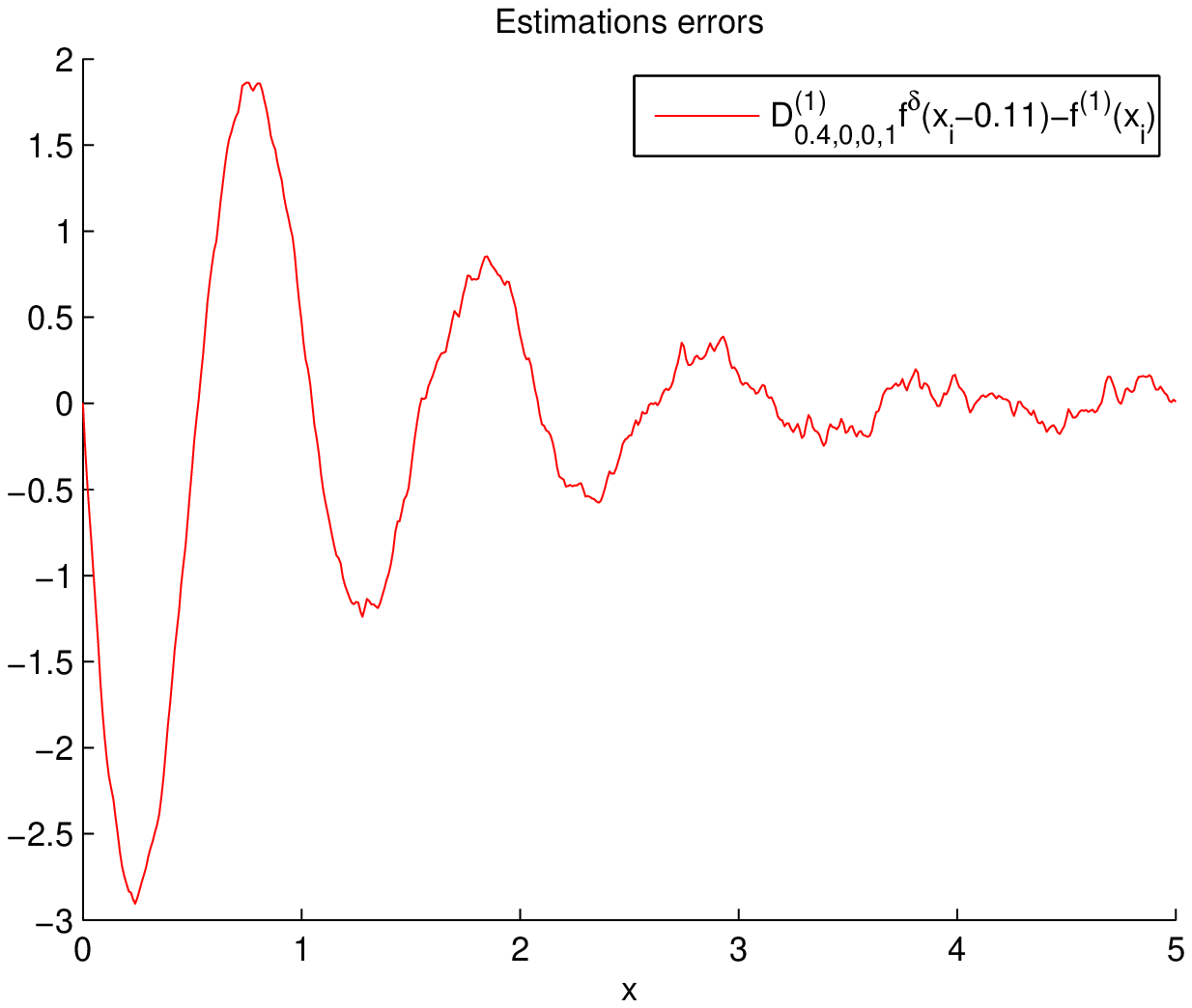}}%
\caption{$D_{h,\alpha,\beta,q}^{(1)}f^{\delta}(x_{i}-\theta_{2}h)-f^{(1)}%
(x_{i})$ with $h=0.4, \alpha=\beta=0, q=1, \theta_{2}=0.276$.}%
\label{simulation2}%

\centering {\includegraphics[scale=0.6]{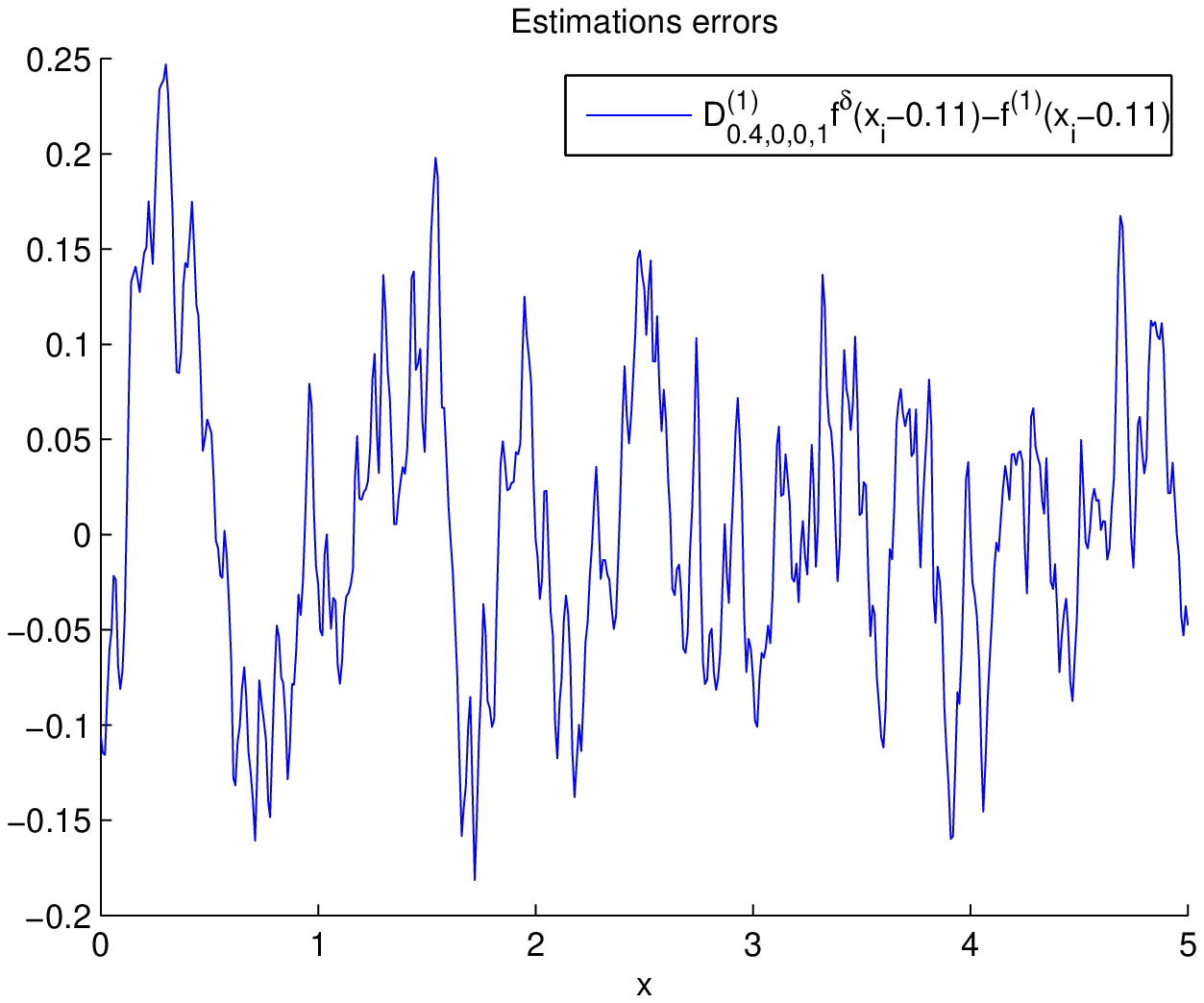}}%
\caption{$D_{h,\alpha,\beta,q}^{(1)}f^{\delta}(x_{i}-\theta_{2}h)-f^{(1)}%
(x_{i}-\theta_{2}h)$ with $h=0.4, \alpha=\beta=0, q=1, \theta_{2}=0.276$.}%
\label{simulation3}%
\end{figure}

\end{document}